\newtheorem{Corollary}[subsection]{Corollary}
\newtheorem*{Theorem A}{Theorem A}
\newtheorem{Definition}[subsection]{Definition}
\newtheorem{Lemma}[subsection]{Lemma}
\newtheorem*{Remark}{Remark}
\newtheorem*{Corollary B}{Corollary B}
\newtheorem*{Claim}{Claim}
\begin{document}
\title{\textbf{On the volume of sectional-hyperbolic sets}}
\author {\textbf{Daofei Zhang \& Yuntao Zang}}
\date{}
\maketitle
\begin{abstract}
For a transitive sectional-hypebolic set $\Lambda$ with positive volume on a $d$-dimensional manifold $M$($d\ge3$), we show that $\Lambda=M$ and $\Lambda$ is a uniformly hyperbolic set without singularities.
\end{abstract}

\textbf{Keywords}: uniformly hyperbolic, sectional-hyperbolic, volume.

\section{Introduction}

The volume of invariant sets, especially these with some hyperbolicity, play an important role in understanding the dynamics in a measure-theoretical point of view. For diffeomorphisms, Bowen in \cite{Bow75} constructs an example of a hyperbolic horseshoe with positive volume for $C^{1}$ system. But when the differentiability is greater than one, \cite{AlP08} proves the nonexistence of hyperbolic horseshoes with positive volume in the setting of partial hyperbolicity. For flows, people are interested in the measure-theoretical properties of singular-hyperbolic set(a partially hyperbolic set with sectionally expanding central bundle). One important direction involves the existence of SRB measures and other statistical properties, e.g. \cite{APP09}, \cite{LeY17}. For the volume of sectional-hyperbolic set, \cite{AAP07} proves that any proper singualr hyperbolic attractor on three dimensional manifold has zero volume. In this paper, we partially extend this result for the case of higher dimension. More precisely, we prove that a transitive sectional-hyperbolic set with positive volume must be the whole manifold, moreover, it is uniformly hyperbolic and contains no singularities.

%
%
%
%
%
\section{Statement of the result}
Let $M$ be a compact Riemannian manifold with dimension $d\geq3$. The Riemannian structure on $M$ induces a volume measure. Let $\mathfrak{X}^{1}(M)$ be the set of $C^{1}$ vector field on $M$ and let $\mathfrak{X}^{1+}(M)$ be the set of $C^{1}$ vector field whose derivative $DX$ is H$\ddot{o}$lder continuous.

We say $\sigma\in M$ is a singularity if $X(\sigma)=0$. Let $\{X_{t}\}_{t\in\mathbb{R}}$ be the flow induced by $X$. A subset $\Lambda$ is called \emph{invariant} if $X_{t}(\Lambda)=\Lambda$, for any $t\in\mathbb{R}$. A sub-bundle $F$ over an invariant set $\Lambda$ is called \emph{invariant} if $DX_{t}(F)=F$, for any $t\in\mathbb{R}$. An invariant set $\Lambda$ is called \emph{transitive} if the forward orbits of some point $x\in\Lambda$ is dense in $\Lambda$.

\begin{Definition}
	Suppose $\Lambda$ is a compact invariant set for $X\in \mathfrak{X}^{1}(M)$ and $F$ is an invariant sub-bundle over $\Lambda$, we say $F$ is uniformly contracting if there exist $K>0,\ \lambda\in(0,1)$ such that 
	\\$$||DX_{t}| _{F_{x}}||\le K\lambda^{t},\ \forall\ t\ge 0,\ x\in \Lambda.$$\\
	 Similarly, we say $F$ is uniformly expanding if there exist $K>0,\ \lambda\in(0,1)$ such that\\ $$||DX_{-t}|_{F_{x}}||\le K\lambda^{t},\ \forall\ t\ge 0,\ x\in \Lambda.$$\\ 
	Suppose $E$ is another invariant sub-bundle over $\Lambda$. We say $F$ dominates $E$ if there exist $K>0,\ \lambda\in(0,1)$ such that\\
	$$	
	||DX_{t}|_{E_{x}}||\cdot||DX_{-t}|_{F_{X_{t}(x)}}||\le K\lambda^{t},\ \forall\ t\ge 0,\ x\in \Lambda
	.$$\\ 
\end{Definition}
Notice by definition, a uniformly expanding sub-bundle always dominates a uniformly contracting sub-bundle for the positive times.
\begin{Definition}
Suppose $\Lambda$ is a compact invariant set for $X\in \mathfrak{X}^{1}(M)$. We say $\Lambda$ is a partially hyperbolic set if there is a continuous invariant splitting\\
$$ T_{\Lambda}M=E^{s}\oplus E^{cu}$$\\ 
where $E^{s}$ is uniformly contracting and $E^{cu}$ dominates $E^{s}$.
\end{Definition}
In the above setting, the sub-bundle $E^{cu}$ is called \emph{central bundle}.
\begin{Definition}\label{Uniform hyperbolicity}
	Suppose $\Lambda$ is a compact invariant set for $X\in \mathfrak{X}^{1}(M)$. We say $\Lambda$ is a uniformly hyperbolic set if there is a continuous invariant splitting\\
	$$T_{\Lambda}M=E^{s}\oplus\langle X \rangle\oplus E^{u}$$\\
	where $E^{s}$ is uniformly contracting and $E^{u}$ is uniformly expanding.
\end{Definition}
\begin{Definition}
Suppose $\Lambda$ is a compact invariant set for $X\in \mathfrak{X}^{1}(M) $ and $F$ is an invariant sub-bundle over $\Lambda$ with $\dim(F_{x})\geq 2$ for any $x\in\Lambda$. We say $F$ is sectionally expanding if there exist $K>0,\lambda>0$ such that for any $x\in\Lambda$, any non-collinear vectors $v,w$ in $F_{x}$,
\\ $$|\det DX_{t}\mid_{ span\langle v,w \rangle}|\ge Ke^{\lambda t},\ \forall\ t>0.$$\\
\end{Definition}

\begin{Definition}\label{Sectional hyperbolicity}
	A partially hyperbolic set $\Lambda$ is called sectional-hyperbolic if its central bundle $E^{cu}$ is sectionally expanding.
\end{Definition}

\begin{Remark}
By definition, we notice that a uniformly hyperbolic set without singularities is always a sectional-hyperbolic set. Since the uniformly contracting/expanding sub-bundle for $X$ becomes a uniformly expanding/contracting sub-bundle for the reversed vector field $-X$, a uniformly hyperbolic set $\Lambda$ for $X$ is also a uniformly hyperbolic set for $-X$.
\end{Remark}

Our main result is the following:
\begin{Theorem A}\label{Main result}
Assume $X\in\mathfrak{X}^{1+}(M)$ and $\Lambda$ is a transitive sectional-hyperbolic set with positive volume. Then $\Lambda=M$ and $\Lambda$ is a uniformly hyperbolic set without singularities.
\end{Theorem A}
%
%
%
\section{Partial hyperbolicity and sectional expansion}

Let $\Lambda$ be a partially hyperbolic set, for any $x\in\Lambda$ and $\varepsilon>0$, we define the local strong stable manifold \\
$$W_{\varepsilon}^{ss}(x)\triangleq\{y:d(X_{t}(y),X_{t}(x))\le\varepsilon,\  \limsup_{t\to+\infty}\frac{1}{t}\log d(X_{t}(y),X_{t}(x))<0\}.$$\\
The global strong stable manifold is defined by \\
$$W^{ss}(x)\triangleq\bigcup_{t>0}X_{-t}\bigg(W^{ss}_{\varepsilon}(X_{t}(x))\bigg)$$\\
and the stable manifold is defined by \\
$$W^{s}(x)\triangleq\bigcup_{t\in\mathbb{R}}X_{t}\bigg(W^{ss}(x)\bigg).$$\\
If $\Lambda$ is a uniformly hyperbolic set, for $x\in\Lambda$ and $\varepsilon>0$, besides the local strong stable manifold, we can also define its local strong unstable manifold by\\
$$W_{\varepsilon}^{uu}(x)\triangleq\{y:d(X_{-t}(y),X_{-t}(x))\le\varepsilon,\  \limsup_{t\to+\infty}\frac{1}{t}\log d(X_{-t}(y),X_{-t}(x))<0\}.$$\\
The global strong unstable manifold is defined by\\
$$W^{uu}(x)\triangleq\bigcup_{t>0}X_{t}\bigg(W^{uu}_{\varepsilon}(X_{-t}(x))\bigg)$$\\
and the unstable manifold is defined by\\
$$W^{u}(x)\triangleq\bigcup_{t\in\mathbb{R}}X_{t}\bigg(W^{uu}(x)\bigg).$$\\

It is well know that the stable and unstable manifolds are immersed submanifolds as smooth as the system.

The following lemma is from Theorem 2.2 in \cite{AAP07}.
\begin{Lemma}\label{positive volume_Lambda contains SS}
Let $\Lambda$ be a partially hyperbolic set with positive volume for $X\in \mathfrak{X}^{1+}(M)$. Then there exist a point $x\in\Lambda$ and a neighborhood $\gamma$ of $x$ in $W^{ss}(x)$ such that $\gamma\subset\Lambda$. 
\end{Lemma}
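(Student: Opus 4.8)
The plan is to combine the Lebesgue density theorem with the absolute continuity of the strong stable foliation, and then to upgrade ``positive density along a leaf'' into ``a full relative neighborhood contained in $\Lambda$'' by a magnification argument driven by the expansion of $W^{ss}$ under the \emph{backward} flow. Throughout, the hypothesis $X\in\mathfrak{X}^{1+}(M)$ is what makes the lamination $\{W^{ss}(x)\}$ absolutely continuous and, more importantly, yields the bounded distortion estimates below; for a merely $C^{1}$ field these tools fail, which is consistent with Bowen's positive-volume horseshoe recalled in the introduction.

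First I would fix a Lebesgue density point $x_{0}\in\Lambda$ (these exist since $\Lambda$ has positive volume) and a small foliated flow box about $x_{0}$ in which the local strong stable leaves $W^{ss}_{\mathrm{loc}}$ form a continuous lamination with a product structure transverse to $E^{cu}$. Disintegrating the volume along this lamination and using absolute continuity with bounded Jacobians, a Fubini-type argument converts the full $M$-density of $\Lambda$ at $x_{0}$ into the statement that almost every leaf in the box carries leaf-density points of $\Lambda$. I would select one such $p$, so that $\Lambda$ has relative density $1$ at $p$ inside its own leaf $W^{ss}(p)$; writing $I_{r}\subset W^{ss}(p)$ for the leaf-interval of radius $r$ about $p$, this says the relative defect $\delta(r):=m_{\xi}(I_{r}\setminus\Lambda)/m_{\xi}(I_{r})\to0$ as $r\to0$, where $m_{\xi}$ denotes arclength along the leaf.

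Next comes the magnification. Since the forward flow uniformly contracts $E^{s}$, the backward flow $X_{-t}$ ($t>0$) expands $W^{ss}(p)$, so I can pick $t=t(r)\to\infty$ with $X_{-t(r)}(I_{r})$ equal to a leaf-interval $J_{r}$ of a \emph{fixed} small radius $\rho_{0}$ about $q_{r}:=X_{-t(r)}(p)\in\Lambda$. The crucial ingredient is bounded distortion: because the forward orbit $\{X_{s}(J_{r}):0\le s\le t(r)\}$ shrinks geometrically and stays below scale $\rho_{0}$, the H\"older continuity of $DX$ forces the logarithmic distortion of $X_{-t(r)}$ along $I_{r}$ to be at most $C\rho_{0}^{\alpha}$, \emph{uniformly in} $t(r)$. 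Transporting the defect by the change-of-variables formula along the leaf and using invariance $X_{-t}(\Lambda)=\Lambda$ then gives
$$
\frac{m_{\xi}(J_{r}\setminus\Lambda)}{m_{\xi}(J_{r})}\ \le\ \bigl(1+\eta(\rho_{0})\bigr)\,\delta(r),\qquad \eta(\rho_{0})\le e^{C\rho_{0}^{\alpha}}-1,
$$
so the relative defect of $\Lambda$ on the fixed-size intervals $J_{r}$ tends to $0$ as $r\to0$, even though the distortion constant itself does not.

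Finally I would take a sequence $r_{n}\to0$, extract a limit $q_{r_{n}}\to q_{\ast}\in\Lambda$ by compactness, and pass the fixed-radius intervals $J_{r_{n}}$ to a limit interval $J_{\ast}\subset W^{ss}(q_{\ast})$ using continuity of the stable lamination. Since $\Lambda$ is closed, $J_{\ast}\setminus\Lambda$ is relatively open, and the vanishing defects together with lower semicontinuity of arclength on open subsets of the varying leaves force $m_{\xi}(J_{\ast}\setminus\Lambda)=0$; a relatively open subset of a leaf-interval with zero arclength is empty, whence $J_{\ast}\subset\Lambda$. Taking $x=q_{\ast}$ and $\gamma=J_{\ast}$ then proves the lemma. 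The main obstacle, and the only place where the regularity is genuinely used, is the \emph{uniform} bounded distortion in the magnification step: one must check that the distortion of $X_{-t}$ along strong stable leaves is governed by the final scale alone and not by the elapsed time, which is precisely the summable-series estimate made available by H\"older continuity of $DX$ combined with the uniform contraction of $E^{s}$.
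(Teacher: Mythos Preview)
The paper does not supply its own proof of this lemma: it simply cites Theorem~2.2 of \cite{AAP07} and moves on. Your proposal is correct and is, in outline, exactly the argument carried out in that reference (Lebesgue density point, absolute continuity of the strong stable lamination to obtain a leaf-density point, a magnification/zoom step using that $X_{-t}$ expands $W^{ss}$ together with bounded distortion from the H\"older regularity of $DX$, and a compactness limit to produce a genuine leaf neighborhood inside $\Lambda$). One cosmetic remark: you phrase things in terms of ``leaf-intervals'' and ``arclength'', which suggests $\dim E^{s}=1$; since the paper works on $M^{d}$ with $d\ge3$ and $E^{s}$ of arbitrary dimension, you should speak of leaf-balls and the induced Riemannian volume on $W^{ss}$, but the argument is unchanged.
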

The following lemma is from Corollary 2.7 in \cite{BaM11}.
\begin{Lemma}\label{Lorenz-like}
	Let $\Lambda$ be a sectional-hyperbolic set for $X\in \mathfrak{X}^{1}(M)$ and $\sigma$ a singularity in $\Lambda$. Then
	$$W^{ss}(\sigma)\cap\Lambda=\{\sigma\}.$$
\end{Lemma}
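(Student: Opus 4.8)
The plan is to argue by contradiction. Suppose there is a point $y\in\big(W^{ss}(\sigma)\cap\Lambda\big)\setminus\{\sigma\}$, and split into the cases $X(y)=0$ and $X(y)\neq0$. The singular case is disposed of immediately: if $y$ were a singularity then $X_t(y)=y$ for all $t$, so $d(X_t(y),\sigma)\equiv d(y,\sigma)>0$ does not tend to $0$, contradicting $y\in W^{ss}(\sigma)$. Thus the entire difficulty is the regular case, and the strategy there is to show that the flow direction $X(y)$ would have to lie simultaneously in the contracting bundle $E^s_y$ and in the central bundle $E^{cu}_y$, forcing $X(y)=0$.

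The first ingredient I would establish is that \emph{the flow direction lies in the central bundle}: for every regular $x\in\Lambda$ one has $X(x)\in E^{cu}_x$. Write $X(x)=u+v$ with $u\in E^s_x$ and $v\in E^{cu}_x$. By invariance of the flow, $DX_{-t}(X(x))=X(X_{-t}(x))$, whose norm is bounded by $\sup_M\|X\|$ for all $t\geq0$. On the other hand, uniform contraction of $E^s$ in forward time forces uniform expansion in backward time: applying $\|DX_t|_{E^s}\|\leq K\lambda^t$ at the point $X_{-t}(x)$ to the vector $DX_{-t}(u)\in E^s_{X_{-t}(x)}$ gives $\|u\|\leq K\lambda^t\|DX_{-t}(u)\|$, hence $\|DX_{-t}(u)\|\geq K^{-1}\lambda^{-t}\|u\|$. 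Since $E^s\oplus E^{cu}$ is a continuous splitting over the compact set $\Lambda$, the angle between the two sub-bundles is bounded below, so $\|DX_{-t}(X(x))\|\geq c\,\|DX_{-t}(u)\|\to\infty$ unless $u=0$. Comparing with the uniform bound above yields $u=0$, i.e. $X(x)\in E^{cu}_x$.

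The geometric heart of the argument is then to contradict this in the regular case. Because $\sigma$ is a fixed point, $W^{ss}(\sigma)$ is flow-invariant, so the orbit of $y$ stays inside it and therefore the velocity $X(y)$ is tangent to $W^{ss}(\sigma)$ at $y$. Moreover, since $y\in W^{ss}(\sigma)$ the forward orbits of $y$ and $\sigma$ converge exponentially, whence $W^{ss}(y)=W^{ss}(\sigma)$ as immersed submanifolds; and because $y\in\Lambda$, the strong stable manifold theorem for the uniformly contracting bundle gives $T_yW^{ss}(y)=E^s_y$. Combining these, $X(y)\in T_yW^{ss}(\sigma)=E^s_y$. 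But the previous step gives $X(y)\in E^{cu}_y$, so $X(y)\in E^s_y\cap E^{cu}_y=\{0\}$, contradicting regularity of $y$. Together with the singular case this proves $W^{ss}(\sigma)\cap\Lambda=\{\sigma\}$.

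I expect the step requiring the most care to be the identification $T_yW^{ss}(\sigma)=E^s_y$ at a point $y$ that is \emph{not} the base singularity: it relies on the set-theoretic coincidence $W^{ss}(y)=W^{ss}(\sigma)$ combined with the tangency of strong stable manifolds to $E^s$ along $\Lambda$, which I would invoke from the standard invariant manifold theory for the uniformly contracting sub-bundle (valid already for $X\in\mathfrak{X}^1(M)$, as recalled in the excerpt). The auxiliary fact $X(x)\in E^{cu}_x$ is the other load-bearing step; notably it uses only the backward expansion of $E^s$ and the lower bound on the angle of the splitting. It is worth remarking that the argument I propose never uses the sectional-expanding hypothesis beyond the ambient partially hyperbolic structure: the obstruction is purely that the flow direction is trapped in $E^{cu}$ while the strong stable manifold is tangent to $E^s$.
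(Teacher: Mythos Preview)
The paper does not give its own proof of this lemma; it is quoted from Corollary~2.7 of \cite{BaM11}, and the only comment the paper adds is the Remark that the argument there uses only the partially hyperbolic structure, not the sectional expansion. Your proposal is a correct, self-contained version of exactly that standard argument: show $X(x)\in E^{cu}_x$ for every regular $x\in\Lambda$ via backward growth in $E^s$, and then force $X(y)\in E^s_y$ for any $y\in W^{ss}(\sigma)\cap\Lambda$ by identifying $T_yW^{ss}(\sigma)$ with $E^s_y$. Your closing observation that sectional expansion is never invoked is precisely the content of the paper's Remark.

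One small point worth making explicit in the delicate step you flag: to get $T_yW^{ss}(\sigma)=E^s_y$ you do not actually need the global coincidence $W^{ss}(y)=W^{ss}(\sigma)$ as immersed submanifolds. It suffices to note that the local disk $W^{ss}_\varepsilon(y)$ (which exists and is tangent to $E^s_y$ because $y\in\Lambda$) is contained, as a set, in $W^{ss}(\sigma)$, and that near $y$ the global strong stable manifold of $\sigma$ is a smooth $\dim E^s$-dimensional disk (being a backward image of $W^{ss}_\varepsilon(\sigma)$); two such disks of the same dimension, one inside the other, must share the same tangent space at $y$. This sidesteps any subtlety about comparing two global immersed leaves.
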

\begin{Remark}
	The proof of Lemma \ref{Lorenz-like} in \cite{BaM11} does not use sectional-hyperbolicity, so it holds, in general, even for partially hyperbolic sets.
\end{Remark}
Next we use graph transformation to derive uniform expansion from sectional expansion when there is no singularity.
\begin{Lemma}\label{no_singularity implies hyperbolicity}
	Let $X\in \mathfrak{X}^{1}(M)$ and $\Lambda$ be an invariant compact subset without singularities, assume there is an continuous  sectionally expanding  sub-bundle $E^{cu}\subset T_{\Lambda}M$ over $\Lambda$ with $X(x)\subset E^{cu}$ for any $x\in\Lambda$. Then there is a continuous invariant splitting $E^{cu}=\langle X\rangle\oplus E^{u}$ where $E^{u}$ is uniformly expanding.
\end{Lemma}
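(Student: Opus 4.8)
\textit{Proof proposal.} The plan is to reduce everything to the linear Poincaré flow on (the normal part of) $E^{cu}$ and then to run a graph transform. The first point is that, since $\Lambda$ is compact and contains no singularities, $0<c:=\min_{\Lambda}\|X\|\le \|X(y)\|\le \max_{\Lambda}\|X\|=:C<\infty$; hence $\langle X\rangle$ is a genuine continuous $1$-dimensional invariant sub-bundle contained in $E^{cu}$, along which $DX_t$ is an almost-isometry, $\|DX_t|_{\langle X(x)\rangle}\|=\|X(X_t x)\|/\|X(x)\|\in[c/C,\,C/c]$ for all $t$.

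Next I would set up the normal bundle $N_x:=\langle X(x)\rangle^{\perp}$, the orthogonal projection $\pi_{N}$, and the linear Poincaré flow $P_t:=\pi_{N}\circ DX_t:N_x\to N_{X_t x}$. Because $X(x)\in E^{cu}_x$, the subspace $\mathcal{N}^{cu}_x:=E^{cu}_x\cap N_x=\pi_{N_x}(E^{cu}_x)$ is a continuous sub-bundle of rank $\dim E^{cu}-1$, complementary to $\langle X\rangle$ inside $E^{cu}$, and $P_t(\mathcal{N}^{cu}_x)=\mathcal{N}^{cu}_{X_t x}$. For $v\in E^{cu}_x$ not collinear with $X(x)$, writing $\bar v:=\pi_{N_x}v\neq 0$, one has (area of a parallelogram equals base times height, and $DX_t X(x)=X(X_t x)$)
\[
\bigl|\det DX_t|_{\mathrm{span}\langle X(x),v\rangle}\bigr|
=\frac{\|DX_t v\wedge X(X_t x)\|}{\|v\wedge X(x)\|}
=\frac{\|X(X_t x)\|}{\|X(x)\|}\cdot\frac{\|P_t\bar v\|}{\|\bar v\|}.
\]
Combining this with sectional expansion of $E^{cu}$ and the two-sided bounds on $\|X\|$ gives $\|P_t\bar v\|\ge \tfrac{c}{C}Ke^{\lambda t}\|\bar v\|$ for all $t\ge 0$, i.e. $P_t|_{\mathcal{N}^{cu}}$ is uniformly expanding: $\|P_{-t}|_{\mathcal{N}^{cu}_x}\|\le \tfrac{C}{cK}e^{-\lambda t}$.

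The remaining step is to lift $\mathcal{N}^{cu}$ to a $DX_t$-invariant complement $E^{u}$ of $\langle X\rangle$ inside $E^{cu}$. Writing $DX_t|_{E^{cu}_x}$ in block form relative to $E^{cu}_x=\langle X(x)\rangle\oplus\mathcal{N}^{cu}_x$, the lower-left block vanishes (because $DX_t X(x)\in\langle X(X_tx)\rangle$), the diagonal blocks are the almost-isometry on $\langle X\rangle$ and $P_t|_{\mathcal{N}^{cu}}$, and every candidate complement is the graph of a continuous section $x\mapsto L_x\in\mathrm{Hom}(\mathcal{N}^{cu}_x,\langle X(x)\rangle)$. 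Push-forward by $DX_{t_0}$ acts on such sections by an affine graph transform whose linear part has norm $\le (C/c)\cdot\tfrac{C}{cK}e^{-\lambda t_0}$; choosing $t_0$ large makes it a contraction on a suitable closed ball of continuous sections (the radius being governed by the fixed, bounded off-diagonal block $b_{t_0}$, finite by compactness of $\Lambda$). The unique fixed section $L^{*}$ is continuous, and a standard commutation argument (the $\Gamma_t$'s commute and share a unique fixed point in a common ball) promotes $DX_{t_0}$-invariance of $E^{u}:=\mathrm{graph}(L^{*})$ to $DX_t$-invariance for all $t\in\mathbb{R}$. Finally $\pi_{N}$ restricts to a continuous bundle isomorphism $E^{u}\to\mathcal{N}^{cu}$ with uniformly bounded norm and co-norm, and it conjugates $DX_t|_{E^{u}}$ to $P_t|_{\mathcal{N}^{cu}}$, so $E^{u}$ inherits uniform expansion; together with $\dim\langle X\rangle+\dim E^{u}=\dim E^{cu}$ and transversality this gives the splitting $E^{cu}=\langle X\rangle\oplus E^{u}$.

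I expect the main obstacle to be the bookkeeping in the last paragraph: checking that the graph transform leaves invariant a ball of continuous sections and is eventually a genuine uniform contraction there, and that its fixed point is continuous and independent of the ball so the commutation/invariance argument goes through. The extraction of $P_t|_{\mathcal{N}^{cu}}$ from the sectional determinant and the transfer of uniform expansion back to $E^{u}$ are routine once this framework is in place; note in particular that neither partial hyperbolicity nor $C^{1+}$ regularity is used here, only the stated hypotheses.
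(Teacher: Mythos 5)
Your proposal is correct and follows essentially the same route as the paper: both extract uniform expansion of a complement of $\langle X\rangle$ in $E^{cu}$ from the sectional determinant (your base-times-height identity for the linear Poincar\'e flow is exactly the paper's Law-of-Sines computation in the special case $F=\langle X\rangle^{\perp}\cap E^{cu}$) and then obtain the invariant bundle $E^{u}$ as the fixed point of a contracting graph transform on continuous sections. The only point of divergence is the upgrade from $DX_{T}$-invariance to $DX_{t}$-invariance for all $t$, where you invoke commutation plus uniqueness of the fixed point while the paper argues by contradiction from the backward asymptotics of $\|DX_{-nT}(w)\|$; both arguments are standard and work here.
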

\begin{proof}
	We first notice the following two facts:
	\begin{enumerate}
		\item Since $\Lambda$ contains no singularity, there is $C>0$ such that 
		$$C^{-1}\leq \frac{||X(X_{t}(x))||}{||X(x)||}\leq C,\,\forall\,x\in\Lambda,\,t\in\mathbb{R};$$
		\item Since $E^{cu}$ is sectionally expanding, there are $K>0$ and $\lambda<1$ such that $$|\det DX_{-t}(x)|_{span\langle X(x),v\rangle}|\leq K\lambda^{t},\,\forall\,x\in\Lambda,v\in E^{cu}_{x},\,t>0.$$
	\end{enumerate}
	
	Choose any continuous sub-bundle $F$ on $\Lambda$ such that  $E^{cu}=\langle X\rangle \oplus F$(e.g. $F$ is the orthogonal bundles of $\langle X\rangle $ in $E^{cu}$). We use $\pi_{X}$ and $\pi_{F}$ to denote the projection on $\langle X\rangle $ and $F$. 
	\begin{Claim}
		$$\lim_{t\to+\infty}\sup_{x\in\Lambda}||\pi_{F}\circ DX_{-t}(x)|_{F}||=0.$$
	\end{Claim}	
	\begin{proof}[proof of the Claim]

		For the bundle $F$, we have the following properties:
		\begin{itemize}
			\item By continuity, there is some constant $\delta$ such that $$\sin\angle(v_{F},X(x))\geq\delta>0,\,\forall\,x\in\Lambda,v_{F}\in F_{x};$$
			\item By Law of Sines, $$||v||\cdot \sin\angle(v,X(x))=||\pi_{F}(v)||\cdot\sin\angle(\pi_{F}(v),X(x)),\,\forall\,x\in\Lambda,\,v\in E^{cu}.$$
		\end{itemize}
		Hence for any $x\in\Lambda$, any $t>0$ and any $v_{F}\in F_{x}$, we have
		$$\begin{aligned}
		&\quad|\det(DX_{-t}(x)|_{span\langle X(x),v_{F}\rangle})|\\[2 mm]
		&=\frac{||X(X_{-t}(x))||\cdot||DX_{-t}(x)(v_{F})||\cdot\sin\angle(DX_{-t}(x)(v_{F}),X(X_{-t}(x)))}{||X(x)||\cdot||v_{F}||\cdot\sin\angle(v_{F},X(x))}\\
		&= \frac{||X(X_{-t}(x))||\cdot||\pi_{F}(DX_{-t}(x)(v_{F}))||\cdot\sin\angle(\pi_{F}(DX_{-t}(x)(v_{F})),X(X_{-t}(x)))}{||X(x)||\cdot||v_{F}||\cdot\sin\angle(v_{F},X(x))}\\	
		&\leq K\lambda^{t}.
		\end{aligned}$$
		Therefore we have $$\frac{||\pi_{F}(DX_{-t}(x)(v_{F}))||}{||v_{F}||}\leq K\lambda^{t} \frac{||X(x)||\cdot\sin\angle(v_{F},X(x))}{||X(X_{-t}(x))||\cdot\sin\angle(\pi_{F}(DX_{-t}(x)(v_{F})),X(X_{-t}(x)))}\leq CK\delta^{-1}\lambda^{t}.$$
		This completes the proof of the claim.
	\end{proof}
Choose $T>0$ large enough such that
$$||DX_{T}(X_{-T}(x))|_{\langle X\rangle}||\cdot||\pi_{F}\circ DX_{-T}(x)|_{F}||\leq \frac{1}{2},\,\forall\,x\in\Lambda.\eqno(\ast)$$
	
Define $$\widetilde{\mathcal{B}}\triangleq \prod_{x\in\Lambda}\mathcal{L}(F_{x},\langle X(x)\rangle)$$ where $\mathcal{L}(F_{x},\langle X(x)\rangle)$ is the Banach space of all bounded linear maps from $F_{x}$ to $\langle X(x)\rangle$. For any $L\in \widetilde{\mathcal{B}}$, define $||L||\triangleq \sup_{x\in\Lambda} ||L_{x}||$. One can check that under this norm, $\widetilde{\mathcal{B}}$ is a Banach space. We say $L\in \widetilde{\mathcal{B}}$ is a continuous section if for any continuous vector field $\sigma:\Lambda\to F$, the vector field $L\circ\sigma:\Lambda\to\, \langle X\rangle$ is continuous. Let $\mathcal{B}\subset \widetilde{\mathcal{B}}$ be the collection of all continuous sections, one can check that  $\mathcal{B}$ is also a Banach space.

Define a map $\Phi: \mathcal{B}\to \mathcal{B}$ by $$(\Phi(L))_{x}\triangleq DX_{T}\circ L_{X_{-T}(x)}\circ\pi_{F}\circ DX_{-T}(x)-DX_{T}\circ\pi_{X}\circ DX_{-T}(x),\,\forall\,x\in\Lambda.$$ One can check it is well defined and for any $L^{1},L^{2}\in \mathcal{B}$, a direct estimation shows $$||\Phi(L^{1})-\Phi(L^{2})||\leq\sup_{x\in\Lambda}(||DX_{T}(X_{-T}(x))|_{\langle X\rangle}||\cdot||L^{1}-L^{2}||\cdot||\pi_{F}\circ DX_{-T}(x)|_{F}||).$$
	
Then by $(\ast)$, we have $||\Phi(L^{1})-\Phi(L^{2})||\leq \frac{1}{2}||L^{1}-L^{2}||$. By Contraction Mapping Theorem, there is some $L\in\mathcal{B}$ with $\Phi(L)=L$. Let $E^{u}=(F,L(F))$, by the definition of $\Phi$, $E^{u}$ is an $DX_{T}$-invariant continuous bundle on $\Lambda$.  By the sectional expansion and the fact that the angle between $E^{u}$ and $\langle X\rangle$ is uniformly bounded below, we get that $E^{u}$ is uniformly expanding w.r.t. $DX_{T}$. We next show $E^{u}$ is $DX_{t}$-invariant for any $t\in\mathbb{R}$. Going by contradiction, assume there exist $t_{0}$ and $w\in E^{u}_{x}$ such that\\
$$DX_{t_{0}}(w)\notin E^{u}_{X_{t_{0}}(x)}.$$\\
Notice $$||DX_{-nT}(DX_{t_{0}}(w))||=||DX_{t_{0}}(DX_{-nT}(w))||\leq ||DX_{t_{0}}||\cdot ||DX_{-nT}(w)||\rightarrow 0.$$
On the other hand, write $DX_{t_{0}}w=w_{1}+w_{2}\in \langle X\rangle\oplus E^{u}$ with $w_{1}\ne0$. Then $$||DX_{-nT}(DX_{t_{0}}(w))||\geq ||DX_{-nT}(w_{1})||-||DX_{-nT}(w_{2})||,\,\forall\,n\in\mathbb{N}.$$
Since there is no singularity, $||DX_{-nT}(w_{1})||$ is uniformly bounded below. Since $E^{u}$ is uniformly expanding w.r.t. $DX_{T}$, $||DX_{-nT}(w_{2})||\rightarrow 0.$ Hence we get $||DX_{-nT}(DX_{t_{0}}(w))||\nrightarrow 0$, a contradiction.
\end{proof}

By Lemma \ref{no_singularity implies hyperbolicity}, we get the following consequence.
\begin{Corollary} \label{No singularity impies uniform hyperbolicity}
	Suppose $\Lambda$ is a sectional-hyperbolic set for $X\in \mathfrak{X}^{1}(M)$. If $\Lambda$ contains no singularities, then $\Lambda$ is a uniformly hyperbolic set.
\end{Corollary}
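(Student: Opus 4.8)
The plan is to obtain this as an immediate consequence of Lemma \ref{no_singularity implies hyperbolicity}. Let $T_{\Lambda}M=E^{s}\oplus E^{cu}$ be the partially hyperbolic splitting, so that $E^{s}$ is uniformly contracting, $E^{cu}$ dominates $E^{s}$, and, by Definition \ref{Sectional hyperbolicity}, $E^{cu}$ is sectionally expanding (in particular continuous). The one nontrivial point to verify is that the flow direction lies inside the central bundle, i.e. $X(x)\in E^{cu}_{x}$ for every $x\in\Lambda$. Granting this, Lemma \ref{no_singularity implies hyperbolicity} applies to $E^{cu}$ and produces a continuous invariant splitting $E^{cu}=\langle X\rangle\oplus E^{u}$ with $E^{u}$ uniformly expanding; combining with $E^{s}$ we obtain the continuous invariant splitting $T_{\Lambda}M=E^{s}\oplus\langle X\rangle\oplus E^{u}$ with $E^{s}$ uniformly contracting and $E^{u}$ uniformly expanding, which is precisely Definition \ref{Uniform hyperbolicity}, so $\Lambda$ is a uniformly hyperbolic set.

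To check that $X(x)\in E^{cu}_{x}$, write $X(x)=v^{s}+v^{cu}$ with $v^{s}\in E^{s}_{x}$ and $v^{cu}\in E^{cu}_{x}$, and suppose for contradiction that $v^{s}\ne 0$. Since $\Lambda$ is compact and contains no singularity, $||X||$ is bounded above and below by positive constants on $\Lambda$, so $||DX_{-t}(X(x))||=||X(X_{-t}(x))||$ stays in a fixed compact subinterval of $(0,+\infty)$ for all $t\ge 0$. Uniform contraction of $E^{s}$ gives $||DX_{-t}(v^{s})||\ge K^{-1}\lambda^{-t}||v^{s}||\to+\infty$, hence from $DX_{-t}(v^{cu})=DX_{-t}(X(x))-DX_{-t}(v^{s})$ we get $||DX_{-t}(v^{cu})||/||DX_{-t}(v^{s})||\to 1$ as $t\to+\infty$. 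On the other hand, applying the domination inequality for $E^{cu}$ over $E^{s}$ at the point $X_{-t}(x)$ with time $t$, together with the elementary bounds $||DX_{-t}(v^{s})||\ge ||v^{s}||\cdot||DX_{t}|_{E^{s}_{X_{-t}(x)}}||^{-1}$ and $||DX_{-t}(v^{cu})||\le ||v^{cu}||\cdot||DX_{-t}|_{E^{cu}_{x}}||$, one obtains
$$\frac{||DX_{-t}(v^{cu})||}{||DX_{-t}(v^{s})||}\le \frac{||v^{cu}||}{||v^{s}||}\cdot||DX_{t}|_{E^{s}_{X_{-t}(x)}}||\cdot||DX_{-t}|_{E^{cu}_{x}}||\le \frac{||v^{cu}||}{||v^{s}||}\,K\lambda^{t}\to 0,$$
a contradiction. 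Therefore $v^{s}=0$ and $X(x)\in E^{cu}_{x}$.

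The only real content is this verification that the flow direction lies in $E^{cu}$; the remainder is bookkeeping with the definitions plus the invocation of Lemma \ref{no_singularity implies hyperbolicity}. I do not anticipate a serious obstacle here, only the need to keep careful track of which direction of time (forward versus backward) each inequality runs in when setting up the domination estimate, and, if necessary, enlarging the constant $K$ and $\lambda$ so that a single pair serves the contraction, domination and sectional-expansion bounds simultaneously.
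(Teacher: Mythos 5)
Your proposal is correct and follows the paper's route exactly: the paper derives the corollary as an immediate consequence of Lemma \ref{no_singularity implies hyperbolicity} with no further argument. The one hypothesis of that lemma which is not literally part of Definition \ref{Sectional hyperbolicity}, namely that $X(x)\in E^{cu}_{x}$, is left implicit in the paper, and your domination-versus-contraction argument along backward orbits verifies it correctly (in fact you only need $\|X(X_{-t}(x))\|$ bounded above, which already follows from compactness of $M$ without invoking the absence of singularities).
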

\section{Proof of the main result}
\begin{proof}[Proof of the Theorem A]

Since $\Lambda$ has positive volume, by lemma \ref{positive volume_Lambda contains SS}, there exist a point $x\in\Lambda$ and a neighborhood $\gamma$ of $x$ in $W^{ss}(x)$ such that $\gamma\subset\Lambda$. Let $\alpha(x)\subset \Lambda$ be the $\alpha$-limit set of $x$. We first notice for any point $z\in\alpha(x)$, $W^{ss}(z)\subset \Lambda$. Indeed, let $\{t_{k}>0\}$ be an increasing sequence such that $X_{-t_{k}}(x)\to z$, then for any compact part $D\subset W^{ss}(z)$, we can find a sequence of compact subsets $\gamma_{t_{k}}\subset X_{-t_{k}}(\gamma)$ such that $\gamma_{t_{k}}\xrightarrow{C^{0}} D$. This is the consquence of the fact that the backward iteration of strong stable manifold is uniformly expanding. Hence $\alpha(x)$ contains no singularities, for otherwise it would contradict with Lemma \ref{Lorenz-like}.

Let \\
$$\Omega=W^{ss}(\alpha(x))\triangleq\bigcup_{z\in\alpha(x)}W^{ss}(z).$$\\
Notice $\Omega$ is a compact invariant subset of $\Lambda$ and again by Lemma \ref{Lorenz-like}, $\Omega$ contains no singularities. Hence by Corollary \ref{no_singularity implies hyperbolicity}, $\Omega$ is a uniformly hyperbolic set.


\begin{Claim}
	$\Omega=\Lambda$.
\end{Claim}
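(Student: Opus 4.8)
The plan is to prove $\Omega = \Lambda$ by showing $\Omega$ is open in $\Lambda$; since $\Omega$ is compact (hence closed in $\Lambda$) and nonempty, and $\Lambda$ is transitive (hence connected in a dynamical sense—any nonempty open and closed invariant subset must be everything, once we know it contains the dense forward orbit), this forces $\Omega = \Lambda$. More precisely, I would first argue that $\Omega$, being a uniformly hyperbolic set with the structure $E^s \oplus \langle X\rangle \oplus E^u$, has local product structure, and that $\Omega$ contains the $\alpha$-limit set $\alpha(x)$ together with full strong stable manifolds through each of its points. The key geometric input is that $\Lambda$ itself contains, near $x$, a whole piece $\gamma \subset W^{ss}(x) \subset \Lambda$ of a strong stable leaf, and $\gamma$ accumulates (under backward iteration) on strong stable leaves through every point of $\alpha(x)$.

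The heart of the argument is to upgrade ``$\Omega$ contains strong stable leaves of its points'' to ``$\Omega$ contains strong \emph{unstable} leaves (and hence a neighborhood in $\Lambda$) of its points.'' For this I would use transitivity of $\Lambda$: pick a point $p \in \Lambda$ whose forward orbit is dense in $\Lambda$. Since $\alpha(x) \subset \Omega$ and $\Omega$ is a hyperbolic set with nonempty interior in the leaf direction, the forward orbit of $p$ must enter a small flow box around some point $q \in \Omega$ in such a way that, by the inclination lemma / $\lambda$-lemma for flows, the forward iterates of the local unstable manifold $W^{uu}_{loc}(q)$ — which lie in $\Lambda$ because $\Lambda$ is closed and invariant and $q$'s unstable manifold is approximated by orbit segments of $p$ inside $\Lambda$ — accumulate on a full neighborhood in $\Lambda$. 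Combined with the strong stable leaves already known to lie in $\Lambda$, local product structure then gives that $\Omega$ contains an entire relative neighborhood of each of its points in $\Lambda$, i.e. $\Omega$ is open in $\Lambda$.

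I would organize the steps as follows: (1) record that $\Omega$ is compact, invariant, singularity-free, and uniformly hyperbolic (already done in the excerpt); (2) show $W^{uu}(z) \cap \Lambda$ contains a neighborhood of $z$ in $W^{uu}(z)$ for each $z \in \Omega$ — this is the analogue for $-X$ of Lemma \ref{positive volume_Lambda contains SS} applied along the orbit, using that backward-in-time the unstable manifolds of $\alpha(x)$ are obtained as limits of forward iterates of pieces in $\Lambda$, and that $\Lambda$, being partially hyperbolic with positive volume, forces these limit leaves to stay in $\Lambda$; (3) invoke the local product structure of the uniformly hyperbolic set $\Omega$ together with (2) and the strong stable inclusion to conclude $\Omega \supset (\text{relative neighborhood in } \Lambda)$ of each of its points; (4) conclude $\Omega$ is relatively open and closed in $\Lambda$, and by transitivity $\Omega = \Lambda$. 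Once $\Omega = \Lambda$, $\Lambda$ is uniformly hyperbolic without singularities, and then a standard argument (a transitive uniformly hyperbolic set which is also an attractor-repeller, having positive volume, must be open; an open invariant set in a connected manifold that is also closed is all of $M$) yields $\Lambda = M$.

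The main obstacle I anticipate is step (2): making rigorous that $\Lambda$ contains whole local unstable leaves through points of $\Omega$. The asymmetry is real — the hypothesis gives us strong \emph{stable} leaves in $\Lambda$ for free (Lemma \ref{positive volume_Lambda contains SS}), but for unstable leaves we must exploit both transitivity and positive volume more cleverly, essentially running the positive-volume argument for the flow $-X$ restricted to a neighborhood, or else transporting the known stable-leaf-in-$\Lambda$ property across the transitive orbit and using the $\lambda$-lemma to sweep out unstable directions. Care is needed because $\Lambda$ a priori is only sectional-hyperbolic, not uniformly hyperbolic, on all of $\Lambda$ — the uniform hyperbolicity is known only on the sub-set $\Omega$ — so the inclination lemma must be applied inside $\Omega$ (where it is valid) and then the accumulation argument pushed into $\Lambda$ using closedness of $\Lambda$. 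I would also need to double-check that $\alpha(x)$ is nonempty and that its strong stable saturation $\Omega$ is genuinely invariant and compact, which follows from standard arguments but should be stated.
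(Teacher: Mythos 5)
Your overall strategy (make $\Omega$ relatively clopen in $\Lambda$ and invoke transitivity) hinges entirely on your step (2), and that step has a genuine gap which the methods you propose do not close. To run the positive-volume argument of Lemma \ref{positive volume_Lambda contains SS} for the reversed flow $-X$ you would need $\Lambda$ to be partially hyperbolic for $-X$, i.e.\ to carry a uniformly \emph{expanding} sub-bundle over all of $\Lambda$; but at this stage $\Lambda$ is only known to be sectional-hyperbolic and could a priori still contain singularities, so no such bundle exists yet --- its existence is essentially the conclusion of the theorem, making this route circular. The alternative via the inclination lemma also fails: the points of $\Lambda$ accumulating on $q\in\Omega$ that transitivity provides are orbit segments of a single point, and these are one-dimensional curves tangent to the flow direction, hence contained in (not transverse to) $W^{s}(q)$; the $\lambda$-lemma applied to them only recovers orbits of points of $\Omega$, never a $\dim E^{u}$-dimensional unstable disk, so closedness of $\Lambda$ does not give $W^{uu}_{loc}(q)\subset\Lambda$. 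Moreover, even granting (2), your step (3) would only show that a neighborhood of $z$ in $M$ lies in $\Lambda$, not that it lies in $\Omega=W^{ss}(\alpha(x))$, since $\Omega$ is saturated only in the stable direction; so relative openness of $\Omega$ in $\Lambda$ still would not follow.

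The paper's proof sidesteps all of this by never placing unstable leaves inside $\Lambda$. Since $\Omega$ is strong-stable saturated and flow-invariant, $W^{s}(\Omega)=\Omega$, and uniform hyperbolicity of $\Omega$ then gives that $W^{uu}(\Omega)$ --- viewed merely as a subset of $M$, with no claim that it lies in $\Lambda$ --- contains an open neighborhood of $\Omega$ in $M$. Taking $w_{\Lambda}\in\Lambda$ with $\alpha(w_{\Lambda})=\Lambda$, the backward orbit of $w_{\Lambda}$ must enter that neighborhood, hence some backward iterate lies on $W^{uu}(w_{\Omega})$ for a point $w_{\Omega}\in\Omega$; backward asymptoticity along the strong unstable leaf gives $\alpha(w_{\Omega})=\alpha(w_{\Lambda})=\Lambda$, while $\alpha(w_{\Omega})\subset\Omega$ because $\Omega$ is compact and invariant. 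This yields $\Lambda\subset\Omega$ directly, with no need for openness, for local product structure inside $\Lambda$, or for unstable leaves contained in $\Lambda$. You should redirect your argument along these lines.
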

\begin{proof}
	Since $\Omega$ is invariant, $W^{s}(\Omega)=\Omega$. Let $W^{uu}(\Omega)=\bigcup_{z\in \Omega}W^{uu}(z)$. Since $\Omega$ is a uniformly hyperbolic set, $W^{uu}(\Omega)$ contains an open neighborhood of $\Omega$. Choose $w_{\Lambda}\in\Lambda$ such that $\alpha(w_{\Lambda})=\Lambda$, then there is some $t>0$ such that $X_{-t}(w_{\Lambda})\in W^{uu}(\Omega)$, hence there is some $w_{\Omega}\in \Omega$ such that $\alpha(w_{\Omega})=\alpha(w_{\Lambda})=\Lambda$ which implies $\Omega=\Lambda.$

\end{proof}

By the Claim, we get that $\Lambda$ is a uniformly hyperbolic set without singularities and $W^{s}(\Lambda)=\Lambda$. By the Remark below Definition \ref{Sectional hyperbolicity}, $\Lambda$ is also a sectional-hyperbolic set for $-X$.  Then we can repeat the argument above for $-X$, we also get $W^{u}(\Lambda)=\Lambda$, this implies $\Lambda$ is open and closed, hence $\Lambda=M$. 
\end{proof}
\section{Acknowledgment}

The authors would like to thank their supervisor, Prof. Yang Dawei, for many helpful discussions and encouragements. The authors also thank Prof. Renaud Leplaideur for his useful comments.

\begin{tabular}{l l l}
	\emph{\normalsize Daofei ZHANG}
	& \quad \quad \quad \quad \quad \quad \quad \quad \quad \quad \quad   \quad &
	\emph{\normalsize Yuntao ZANG}
	\medskip\\
	\small School of Mathematical Science
	&& \small Laboratoire de Mathématiques d'Orsay\\
	\small Soochow University
	&& \small CNRS - Universit\'e Paris-Sud\\
	\small Suzhou, 215006, P.R. China
	&& \small Orsay, 91405, France\\
	\small \texttt{dfzhang@stu.suda.edu.cn}
	&& \small School of Mathematical Science\\
	&& \small Soochow University\\
	&& \small Suzhou, 215006, P.R. China\\
	&& \small \texttt{yuntao.zang@math.u-psud.fr}\\
\end{tabular}

\begin{thebibliography}{20}
	\bibitem{AAP07}
	 J. F. Alves, V. Araújo, M. J. Pacifico and V. Pinheiro, On the volume of singular-hyperbolic sets, \textit{Dyn. Syst.} \textbf{22}(2007), no. 3, 249-267. 
	 
	 \bibitem{AlP08}
	 J. F. Alves and F. V. Pinheiro, Topological structure of (partially) hyperbolic sets with positive volume, 
	 \textit{Trans. Amer. Math. Soc.} \textbf{360}(2008), no. 10, 5551-5569. 
	 
	  \bibitem{APP09}
	  V. Araujo, M. J. Pacifico, E. R. Pujals and  M. Viana, Singular-hyperbolic attractors are chaotic(English summary),
	  \textit{Trans. Amer. Math. Soc.} \textbf{361}(2009), no. 5, 2431-2485. 
	  
	 
	 \bibitem{BaM11}
	 S. Bautista and C. A. Morales, Lectures on sectional-Anosov flows, \textit{\url{http://preprint.impa.br/Shadows/SERIE_D/2011/86.html}.}
	 
	 
	 
	
	 
	 \bibitem{Bow75}
	  R. Bowen, A horseshoe with positive measure, \textit{Invent. Math.} \textbf{29}(1975), no. 3, 203-204.
	 
%
	 
	 \bibitem{LeY17}
	  R. Leplaideur and D. Yang, SRB measures for higher dimensional singular partially hyperbolic attractors, \textit{Ann. Inst. Fourier (Grenoble).} \textbf{67}(2017), no. 6, 2703-2717. 
	  
%
%
%
%
%
\end{thebibliography}
\end{document}